\newtheorem{thm}{Theorem}[section]
\newtheorem{prop}[thm]{Proposition}
\newtheorem{defn}[thm]{Definition}
\newtheorem{lem}[thm]{Lemma}
\newcounter{step}
\newcounter{claim}
\newcounter{subclaim}
\newenvironment{proof}{\textbf{Proof}\ }{\hfill $\square$}
\newenvironment{claim}{\vspace{0.75\baselineskip}
\textbf{Claim\ }}{\vspace{0.75\baselineskip}}
\newcommand*{\dateenglish}{\renewcommand*{\today}{%
	\number\day \ifcase\day \or
	st\or nd\or rd\or th\or th\or th\or th\or th\or th\or th\or
	th\or th\or th\or th\or th\or th\or th\or th\or th\or th\or
	st\or nd\or rd\or th\or th\or th\or th\or th\or th\or th\or
	st\fi\space \ifcase\month \or
	January\or February\or March\or April\or May\or June\or
	July\or August\or September\or October\or November\or
	December\fi \space\number\year}}
\newcommand{\gi}{geometrically incompressible}
\newcommand{\gc}{geometrically compressible}
\def\co{\colon\thinspace}
\begin{document}
\thispagestyle{plain}


\begin{center}
\scalebox{1.5}{\textbf{Structural properties of bounded }}

\scalebox{1.5}{\textbf{one-sided surfaces in link spaces}}
\vspace{0.5\baselineskip}

\textsc{Loretta Bartolini}
\vspace{0.5\baselineskip}

\parbox{.8\textwidth}
{
\textbf{Abstract}
\vspace{.3\baselineskip}

\footnotesize{Various structural properties are developed for non-orientable surfaces in link spaces. The M\"obius band tree is described to represent genus growth of one-sided surfaces in solid tori. The structure of the Tree allows various insights into the change of genus under boundary slope, which are not possible using the existing continued fractions algorithm. A restriction under which \gi{}, boundary compressible one-sided surfaces have a unique boundary incompressible form away from the boundary is established.}
}
\end{center}
\vspace{0.5\baselineskip}

\section{Introduction}

Much successful study in $3$-manifold topology has been focused on identification and manipulation of incompressible surfaces in knot exteriors. For bounded orientable surfaces, boundary compressions generally do not play a role in the global properties of such surfaces. Non-orientable surfaces, however, admit an additional type of boundary compression associated to M\"obius bands in the surface; such compressions give rise to distinct behaviour, which is not necessarily confined to information near the boundary.

Initially, a framework with which to describe \gi{}, one-sided surfaces in a \textit{torus}$\times I$ is developed, encoding information about genus according to boundary slope. This data echoes that from the genus algorithm of Bredon and Wood~\cite{bredon-wood}, however the graphical representation facilitates the identification of trends, particularly with regard to relative nesting.

Having developed such tools for \gi{} surfaces within a \textit{torus}$\times I$, one can seek to extend to more complex manifolds with toroidal boundary. Whilst the structure for behaviour near the boundary is valuable in determining the position of surfaces in link spaces, it is shown that a restriction is required to rule out interaction between boundary components within the interior.

Many thanks to Hyam Rubinstein for helpful discussions throughout the preparation of this paper.

\section{Preliminaries}

Let $M$ be a compact, connected, orientable, irreducible $3$-manifold and let all maps be considered at $PL$.

\begin{defn}
A link space, or link manifold, is a compact $3$-manifold $M$, with non-empty boundary consisting of a collection of tori. Call $M$ a knot space if its boundary is a single torus.
\end{defn}

\begin{defn}
In a link space $M$, a boundary collar is a closed regular neighbourhood of a boundary component; the core of $M$ is the closure of the complement of a complete set of boundary collars.
\end{defn}

\begin{defn}\label{defn:gi}
A surface $K \not= S^2$ embedded in $M$ is \gc{} if there exists an embedded, non-contractible loop on $K$ that bounds an embedded disc in $M$. Call $K$ \gi{} if it is not \gc{}.
\end{defn}

In the event $K$ is orientable, this definition is analogous to $\pi_1$-injectivity. One-sided surfaces, however, admit compressing discs with double points on the boundary. As such, geometric incompressibility of a one-sided surface supports the existence of such singular discs and does not give rise to an algebraic equivalent.

In compact $3$-manifolds with non-empty boundary, the usual notion of boundary compressions generalises to one-sided surfaces:

\begin{defn}
A bounded surface $K \subset M$ with $\partial K \subset \partial M$ is boundary compressible if there exists an embedded bigon $B \subset M$ with $\partial B = \alpha \cup \beta$, such that $\alpha = B \cap K$ is essential, $\beta = B \cap \partial M$ and $\alpha \cap \beta = \partial \alpha = \partial \beta$.
\end{defn}

Since $K$ is one-sided, if $K \cap T_k$ is a single essential loop for some torus $T_k \subseteq \partial M$, a boundary compressing bigon can correspond to the boundary compression of a M\"obius band. In this case, $\beta$ has ends on locally opposite sides of $\partial K$ and the move is referred to as a {\it M\"obius band compression}. If $\beta$ has ends on locally the same side of $\partial K$, this corresponds to a boundary compression in the usual sense of orientable surfaces and is referred to here as an {\it orientable boundary compression}. Note that there are no non-trivial orientable boundary compressions for a \gi{} surface at a single essential boundary loop, since any arc on a torus with both ends on locally the same side of the loop $K \cap T_k$ can be isotoped onto $K$, thus describing a geometric compression.

\section{One-sided surfaces in a \textit{torus}$\times I$}

In order to explore the boundary compressions of \gi{} one-sided surfaces in link spaces, it is necessary to understand the behaviour of such surfaces in a neighbourhood of the boundary. By first characterising the behaviour of \gi{} one-sided surfaces in a solid torus, which can be extended naturally to a \textit{torus}$\times I$, a framework is developed with which to approach the surface in the interior of the space.

It is known that a $(2p,q)$-slope on the boundary of a solid torus bounds a unique incompressible surface in the interior by Rubinstein~\cite{rubin78}. Furthermore, the genus of such a surface can be computed by the continued fractions algorithm of Bredon and Wood~\cite{bredon-wood}. In investigating genus growth over classes of examples, however, such an algorithm is not well-suited to determining trends. As such, one seeks to represent the effect of slope change on genus graphically.

Note that if the core of the solid torus is deleted to obtain a \textit{torus}$\times I$ and the associated puncture of the surface is fixed at the new, inner boundary component, the relative behaviour of the boundary slope at the outer boundary component in the $I$-bundle generalises that of the solid torus. In order to prove this generalisation applies uniquely to surfaces in a \textit{torus}$\times I$, an analogue  is required to Rubinstein's unique determination of bounded \gi{} surfaces in a solid torus by boundary slope~\cite{rubin78}. This is presented here in Proposition~\ref{prop:uniquecollar}, which is given using the structure of the M\"obius band tree.

\subsection{The M\"obius band tree}\label{subsect:tree}

Consider a bounded, \gi{}, one-sided surface embedded in a solid torus, or a \textit{torus}$\times I$. In the solid torus case, choose co-ordinates such that a meridian disc has boundary slope $(0, 1)$; in the latter case, fix one boundary component and choose co-ordinates such that the fixed boundary has slope $(0, 1)$. In both cases, the surface has boundary slope $(2p, q)$ on the remaining boundary component.

Describe a graph $\Gamma$, where a vertex represents an incompressible non-orientable surface of a particular boundary slope, and join two vertices with an edge if the surfaces they bound differ by a single M\"obius band. Algebraically, a vertex corresponds to a slope $(2p,q)$, where $(2p, q)=1$, and an edge joins vertices $(2p, q)$ and $(2p^{\prime}, q^{\prime})$ if the slopes have intersection number $\pm2$. Without loss of generality, represent vertices as co-ordinates $(p,q)$, where $(p,q)=1$, $q$ odd, and edges join vertices $(p,q)$ and $(p^{\prime}, q^{\prime})$ if $pq^{\prime}-p^{\prime}q=\pm1$. Notice that $\Gamma$ is a full subgraph of the Farey graph $\mathcal{F}$ (see Figure~\ref{fig:MBtree}), where the excluded vertices bound disconnected, hence orientable, surfaces in the interior. 

\begin{figure}[h]
   \centering
   \includegraphics[width=4.7in]{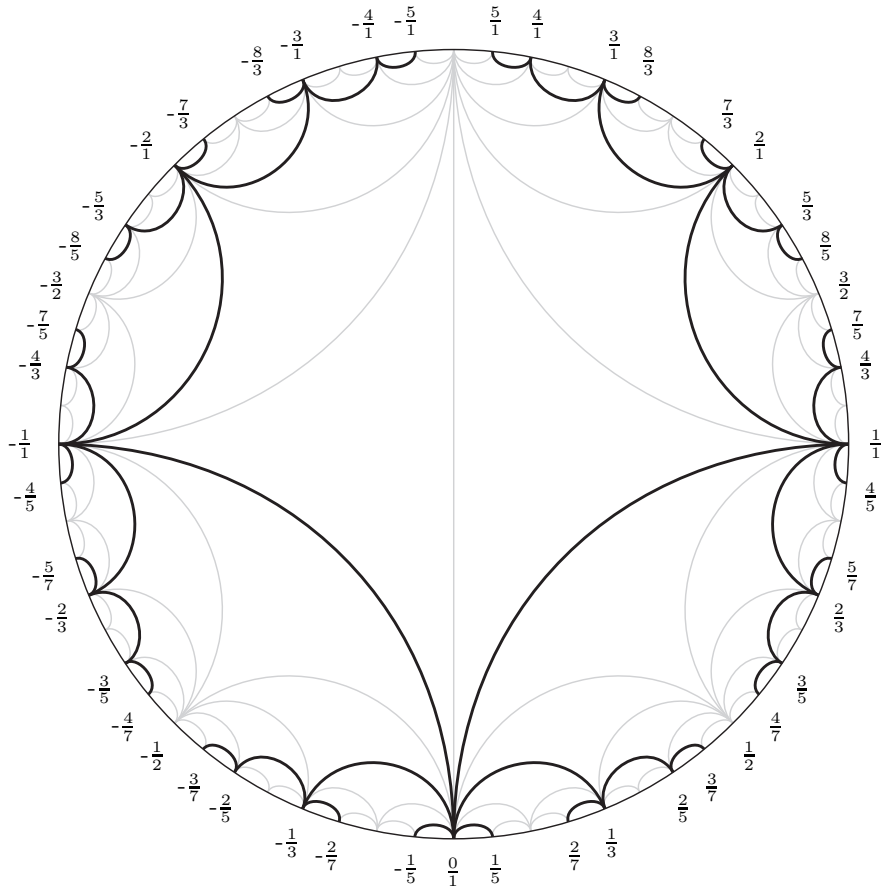}
   \footnotesize
   \put(-173, -2){$\frac{0}{1}$}
   \put(-157, -1){$\frac{1}{5}$}
   \put(-127, 4){$\frac{2}{7}$}
   \put(-113, 9){$\frac{1}{3}$}
   \put(-86, 22){$\frac{2}{5}$}
   \put(-75, 30){$\frac{3}{7}$}
   \put(-61, 42){$\frac{1}{2}$}
   \put(-51, 53){$\frac{4}{7}$}
   \put(-41, 67){$\frac{3}{5}$}
   \put(-27, 94){$\frac{2}{3}$}
   \put(-22, 110){$\frac{5}{7}$}
   \put(-15, 140){$\frac{4}{5}$}
   \put(-13, 160){$\frac{1}{1}$}
   \put(-17, 190){$\frac{4}{3}$}
   \put(-20, 205){$\frac{7}{5}$}
   \put(-25, 220){$\frac{3}{2}$}
   \put(-32, 235){$\frac{8}{5}$}
   \put(-39, 250){$\frac{5}{3}$}
   \put(-57, 273){$\frac{2}{1}$}
   \put(-69, 285){$\frac{7}{3}$}
   \put(-99, 305){$\frac{8}{3}$}
   \put(-112, 311){$\frac{3}{1}$}
   \put(-141, 319){$\frac{4}{1}$}
   \put(-157, 320){$\frac{5}{1}$}
   \put(-192, -1){-$\frac{1}{5}$}
   \put(-222, 4){-$\frac{2}{7}$}
   \put(-237, 9){-$\frac{1}{3}$}
   \put(-262, 22){-$\frac{2}{5}$}
   \put(-274, 30){-$\frac{3}{7}$}
   \put(-288, 42){-$\frac{1}{2}$}
   \put(-299, 53){-$\frac{4}{7}$}
   \put(-308, 67){-$\frac{3}{5}$}
   \put(-324, 94){-$\frac{2}{3}$}
   \put(-329, 110){-$\frac{5}{7}$}
   \put(-336, 140){-$\frac{4}{5}$}
   \put(-338, 160){-$\frac{1}{1}$}
   \put(-334, 190){-$\frac{4}{3}$}
   \put(-331, 205){-$\frac{7}{5}$}
   \put(-326, 220){-$\frac{3}{2}$}
   \put(-319, 235){-$\frac{8}{5}$}
   \put(-310, 250){-$\frac{5}{3}$}
   \put(-292, 273){-$\frac{2}{1}$}
   \put(-278, 285){-$\frac{7}{3}$}
   \put(-250, 305){-$\frac{8}{3}$}
   \put(-235, 311){-$\frac{3}{1}$}
   \put(-208, 319){-$\frac{4}{1}$}
   \put(-192, 320){-$\frac{5}{1}$}
   \caption{The M\"obius band tree $\Gamma$, as it appears in the Farey tessellation of the disc model of $\mathbb{H}^2$}
   \label{fig:MBtree}
\end{figure}

\begin{claim}
$\Gamma$ is a tree.
\end{claim}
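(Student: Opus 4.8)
The plan is to establish the two defining features of a tree separately: that $\Gamma$ is connected and that $\Gamma$ has no cycle. Throughout I would work inside the Farey tessellation of $\mathbb{H}^2$, whose $1$-skeleton is $\mathcal{F}$ and whose ideal triangles have vertex sets $\{\frac ab,\frac cd,\frac{a+c}{b+d}\}$ with $ad-bc=\pm1$. The one piece of geometric input I would rely on is the tessellation property: distinct edges of $\mathcal{F}$ are disjoint geodesics meeting only at shared ideal vertices. Since $\Gamma$ is, by definition, the full subgraph of $\mathcal{F}$ spanned by the odd-denominator vertices, its edges are exactly the Farey edges joining two odd-denominator fractions.

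The first step is a parity lemma: in every Farey triangle exactly one vertex has even denominator and the other two have odd denominator. Two Farey-adjacent fractions cannot both have even denominator, since $ad-bc=\pm1$ would then be even; and if $b,d$ are both odd the mediant denominator $b+d$ is even, while if exactly one of $b,d$ is even the mediant denominator is odd. Hence each triangle has a unique even vertex, and so a unique odd--odd edge, and the odd--odd edges are precisely the edges of $\Gamma$. This already rules out $3$-cycles, but the real payoff is for cycles of arbitrary length.

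For acyclicity I would argue by contradiction. A cycle $C$ in $\Gamma$ is a closed edge-path through distinct odd vertices; because its edges are non-crossing geodesics meeting only at common endpoints, $C$ is a simple closed curve in $\overline{\mathbb{H}}^2$ bounding an ideal polygon region $R$. The crucial point is that $\overline R$ meets the boundary circle only at the ideal vertices of $C$, since each geodesic side cuts off the boundary arc lying beyond it. Consequently every Farey triangle contained in $R$ has all three vertices among the vertices of $C$, hence all odd. But $R$ is a nonempty ideal polygon and so is tiled by at least one Farey triangle, which would then carry no even vertex, contradicting the parity lemma. Therefore $\Gamma$ has no cycles. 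For connectedness I would induct on the odd denominator $q$ to join every vertex to $\frac11$: the integers form a bi-infinite path in $\Gamma$ because $\frac p1$ and $\frac{p+1}1$ are adjacent (base case $q=1$), and for odd $q\ge3$ the fraction $\frac pq$ has exactly two Farey parents, finite fractions with denominators $b,d\ge1$ satisfying $b+d=q$; since $q$ is odd one of $b,d$ is odd, so $\frac pq$ is $\Gamma$-adjacent to a vertex of strictly smaller odd denominator and induction applies.

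I expect the main obstacle to be the acyclicity step, and within it the planar bookkeeping: making precise that $C$ is a simple closed curve, that it bounds an ideal polygon $R$ tiled by Farey triangles, and that $\overline R$ touches the circle only at the vertices of $C$. The parity lemma and the connectivity induction are routine by comparison, and once the region argument is pinned down the contradiction with the parity lemma is immediate, so the two properties together give that $\Gamma$ is a tree.
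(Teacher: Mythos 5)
Your proof is correct and takes essentially the same route as the paper's: connectivity by descending through Farey parents using the parity of triangle vertices, and acyclicity by noting that a cycle would bound an ideal polygon tiled by Farey triangles whose vertices all lie on the cycle and hence all have odd denominator, contradicting the fact that every Farey triangle has an even-denominator vertex. The only differences are cosmetic \--- you induct on the denominator rather than the numerator magnitude for connectivity, and you supply proofs of the parity lemma and of the planar region argument that the paper asserts without detail.
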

\setcounter{subclaim}{1}

$\Gamma$ is connected: Take a vertex $(p, q) \in \Gamma$. Any vertex is in a unique largest ideal triangle in $\mathcal{F}$, where the complementary vertices $(p_1,q_1), (p_1^{\prime},q_1^{\prime}) \in \mathcal{F}$ have $\frac{p_1}{q_1} < \frac{p}{q} < \frac{p_1^{\prime}}{q_1^{\prime}}$ and $|p| > |p_1|, |p_1^{\prime}|$. Since an ideal triangle has precisely two vertices of odd second co-ordinate, one such neighbour \--- say $(p_1,q_1)$ \--- has $q_1$ odd. Therefore, there is an edge in $\Gamma$ joining $(p,q), (p_1,q_1)$. Repeat this process to obtain a neighbour $(p_2, q_2)$ to $(p_1,q_1)$ that has $q_2$ odd and $|p_2| > |p_1|$. Continuing, this process terminates at $(0,1)$, since the magnitude of the first co-ordinate is strictly decreasing. Therefore, there is a path in $\Gamma$ connecting any vertex $(p,q) \in \Gamma$ to $(0,1)$, hence the graph $\Gamma$ is connected. Topologically, this corresponds to the fact that any boundary compressible one-sided surface can be maximally boundary compressed to obtain a boundary incompressible meridian disc or vertical annulus.

$\Gamma$ contains no loops: Suppose there exists an embedded loop $\sigma \subset \Gamma$. Then, considering $\Gamma$ as it appears in the Farey tesselation, $\sigma$ bounds an embedded ideal polygon, which is made up of ideal triangles. An edgemost ideal triangle has all three vertices in $\sigma$. However, any ideal triangle in the Farey tesselation has a vertex $(p,q)$ with $q$ even. Therefore, $\Gamma$ contains no loops.
\vspace{\baselineskip}

Having determined that $\Gamma$ is a tree, observe that the structure inherited from the Farey tessellation illustrates certain natural properties of the tree under the path metric:

\paragraph{Observations}

\begin{enumerate}

\item\label{obs:genus} For vertices $\big\{\frac{p_i}{q_i}\big\}$ on a path rooted at $\frac{0}{1}$, the distance from $\frac{0}{1}$ increases precisely with $|p_i|$;

\item\label{obs:ratio} $\big|\frac{p}{q}\big|$ is closer to $\frac{2}{1}$ than either $\frac{0}{1}$ or $\frac{1}{1}$ if and only if $\frac{3}{2}< \big|\frac{p}{q}\big|$;

\item $\big|\frac{p}{q}\big|$ is closer to $\frac{1}{1}$ than either $\frac{0}{1}$ or $\frac{2}{1}$ if and only if $\frac{1}{2}< \big|\frac{p}{q}\big| <\frac{3}{2}$;

\item $\big|\frac{p}{q}\big|$ is closer to $\frac{0}{1}$ than either $\frac{1}{1}$ or $\frac{2}{1}$ if and only if $\big|\frac{p}{q}\big| < \frac{1}{2}$.
\end{enumerate}

Since the genus of the surface in the solid torus bounded by the curve $(2p, q)$ is the distance in $\Gamma$ of $\frac{p}{q}$ to $\frac{0}{1}$, and any such surface is uniquely determined by its boundary slope, up to isotopy~\cite{rubin78}, these observations can be interpreted as topological information:

By Observation 1, for two surfaces related by boundary compressions, the higher genus surface has longitudinal co-ordinate of larger magnitude.

The relative proximity of a point to the vertices corresponding to the surfaces bounded by curves $(0,1), (2,1), (4,1)$ is governed by proportion of longitudinal to meridional twist in the boundary slope, which in turn dictates the extent to which M\"obius bands nest. A surface that is closest to $(2,1)$ has a single, untwisted M\"obius band at the centremost level of nesting, within which all subsequent bands nest. This leads to a linear relationship between twists in both directions. Meanwhile, a surface closest to $(4, 1)$ has at least two untwisted M\"obius bands at the centremost level of nesting, which determines that longitudinal twisting exceeds meridional in this branch. Furthermore, a surface closest to $(0,1)$ has centremost M\"obius band with at least one meridional twist, giving rise to meridional twisting exceeding longitudinal in this branch. This partition of behaviour, captured in Observations 2\---4 according to the magnitude of the ratio $\frac{p}{q}$, reappears naturally when considering the genus change of non-orientable surfaces under Dehn filling in the following Chapter.

\subsection{Classifying one-sided surfaces in a \textit{torus}$\times I$}

Bounded, \gi{}, one-sided surfaces in a solid torus are uniquely determined by boundary slope, up to isotopy, by Rubinstein~\cite{rubin78}. Therefore, vertices and paths in the M\"obius band tree correspond precisely to surfaces, and compressions thereof, up to isotopy. In order to show that the M\"obius band tree similarly precisely captures bounded, \gi{} surfaces in a \textit{torus}$\times I$, it is necessary to show that any such surface is uniquely determined by its boundary slopes:

\begin{prop}\label{prop:uniquecollar}
Within a \textit{torus}$\times I$, any pair of inner and outer boundary slopes describes a unique geometrically incompressible one-sided surface, up to isotopy.
\end{prop}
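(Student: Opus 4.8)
The plan is to reduce the statement to Rubinstein's uniqueness theorem for the solid torus~\cite{rubin78} by Dehn filling the inner boundary component. Let $K$ and $K'$ be \gi{}, one-sided surfaces in a \textit{torus}$\times I = T^2\times I$ sharing inner slope $\sigma$ on $T^2\times\{0\}$ and outer slope $\tau$ on $T^2\times\{1\}$; we may assume both are connected. First I would attach a solid torus $W$ along the inner boundary so that $\sigma$ bounds a meridian disc, producing a solid torus $V = (T^2\times I)\cup_{T^2\times\{0\}} W$ with $\partial V = T^2\times\{1\}$ and core circle $c$ the core of $W$. Every curve of $K\cap(T^2\times\{0\})$ then has slope $\sigma$ and so bounds a meridian disc; capping along a disjoint, parallel family of such discs yields a surface $\widehat{K}\subset V$ with $\partial\widehat{K}$ of slope $\tau$, and likewise $\widehat{K'}$. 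Since capping boundary circles with discs preserves both connectedness and the presence of a M\"obius band, $\widehat{K}$ is again connected and one-sided.

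The first key step is to show that $\widehat{K}$ is \gi{} in $V$. I would argue by contradiction: given a compressing disc $D$ for $\widehat{K}$, I put it in general position with respect to the capping discs and remove intersections by the standard innermost-circle and outermost-arc exchanges, using that each cap is a disc and that $\partial D$ may be pushed off the caps because it is essential on $\widehat{K}$. Once $D$ is disjoint from the caps it can be isotoped to be disjoint from $c$, so that $D\subset V\setminus N(c) = T^2\times I$ with $\partial D$ lying on $\widehat{K}\setminus N(c) = K$; this is a geometric compression of $K$, contradicting its incompressibility. With $\widehat{K}$ and $\widehat{K'}$ both \gi{}, one-sided, and of common boundary slope $\tau$, Rubinstein's theorem~\cite{rubin78} then supplies an ambient isotopy $g$ of $V$ with $g(\widehat{K}) = \widehat{K'}$.

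The remaining, and most delicate, step is to descend this isotopy to $T^2\times I$. Writing $T^2\times I = V\setminus N(c)$, the isotopy $g$ carries $K = \widehat{K}\cap(V\setminus N(c))$ onto $\widehat{K'}\cap(V\setminus N(g(c)))$, so it suffices to show that $g(c)$ is isotopic to $c$ in the surface complement $V\setminus\widehat{K'}$; an isotopy realising this, supported away from $\widehat{K'}$, then moves $\widehat{K'}\cap(V\setminus N(g(c)))$ to $K'$ and finishes the argument. I expect this to be the main obstacle: although any two cores of a solid torus are isotopic in $V$, one must promote this to an isotopy in the complement of $\widehat{K'}$, equivalently show that the drilling data is recovered uniquely from the capped surface. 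The natural route is to characterise $c$, up to isotopy in $V\setminus\widehat{K'}$, as a core of $V$ meeting a complete system of meridian capping discs once each---such systems being unique up to isotopy through meridian discs bounded by the inner boundary curves---and to verify that this characterisation is preserved by $g$. Checking that the two meridian-disc systems can be matched while holding $\widehat{K'}$ fixed, so that no geometric compression is introduced along the way, is where the care is required.
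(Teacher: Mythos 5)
Your filling-and-capping reduction is fine as far as it goes: capping the inner boundary curves with meridian discs of the filling torus $W$ does preserve geometric incompressibility (in fact more easily than you suggest, since a compressing disc $D$ for $\widehat{K}$ meets $\widehat{K}$ only in $\partial D$, which can be isotoped off the caps, after which $D$ can be pushed out of $W$ region by region), and Rubinstein's theorem~\cite{rubin78} then gives an ambient isotopy $g$ of $V$ with $g(\widehat{K}) = \widehat{K}'$. Note, however, that this runs against the logical grain of the paper, which observes that Proposition~\ref{prop:uniquecollar} \emph{implies} Rubinstein's solid torus theorem by filling: the bounded statement is strictly finer, because the pair $(V \setminus N(c), K)$ records exactly one piece of data that $(V, \widehat{K})$ does not, namely the position of the core $c$ relative to the capped surface. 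That extra datum is where the entire content of the proposition lives, and it is precisely the step your proposal leaves open.

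The gap in your final step is not a matter of ``care''; the claim you defer is equivalent to the proposition itself, and the route you sketch for it is false as stated. First, ``$g(c)$ is isotopic to $c$ in $V\setminus\widehat{K}'$'' is ill-posed: $c$ meets $\widehat{K}'$ once in each capping disc, so what is needed is an ambient isotopy of $V$ preserving $\widehat{K}'$ setwise and carrying $g(c)$ to $c$. Second, no slope-free uniqueness statement about ``cores meeting a complete system of meridian capping discs once each'' can produce such an isotopy, because core positions relative to $\widehat{K}'$ are genuinely non-unique: for \emph{every} slope $\sigma'$ adjacent to $\tau$ in the M\"obius band tree, the once-punctured M\"obius band with slopes $(\sigma', \tau)$ caps off, upon filling along $\sigma'$, to the same pair $(V, \widehat{K}')$ up to homeomorphism, so already in genus one there are infinitely many pairwise inequivalent cores meeting $\widehat{K}'$ in a single capping disc, distinguished exactly by the drilled inner slope. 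Hence any correct version of your uniqueness claim must invoke the hypothesis that both drillings yield inner slope $\sigma$ --- and the statement ``two cores of $V$ with the same drilled slope relative to $\widehat{K}'$ are isotopic through isotopies preserving $\widehat{K}'$'' is just the proposition in dual form, with Rubinstein's theorem contributing nothing toward it. The paper proves exactly this content directly: it decomposes the \textit{torus}$\times I$ into concentric regions each containing a single once-punctured M\"obius band, so that the slope sequence is a minimal (hence unique) path in the M\"obius band tree, and then establishes Lemma~\ref{lem:MBtorusxI} --- uniqueness of the once-punctured M\"obius band joining adjacent slopes --- by a long intersection-exchange argument finished off with Waldhausen's theorem. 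Nothing in your outline substitutes for that work.
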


\begin{proof}
Consider a bounded, \gi{}, non-orientable surface $K_T$ in a \textit{torus}$\times I$ $N$, with boundary curves $s_T \subset \partial_{-}N, t_T \subset \partial_{+}N$. If the slopes of $s_T, t_T$ differ, the surface has non-trivial non-orientable genus $g$.

Parametrise $N = T \times \{t\}$ by $t \in [0,1]$, where $T \times \{0\} = \partial_{-}N$ and $T \times \{1\} = \partial_{+}N$. In $N^{\circ}$, all but finitely many tori $T \times \{t_i\}$, $1\leq i \leq g-1$, intersect $K_T$ transversely. Arrange so there is precisely one connected tangency arc at any critical level. At the non-critical levels, any inessential loops of intersection are trivial on $K_T$ by geometric incompressibility and can be removed by trivial surgery. The remaining intersection $K_T \cap (T \times \{t\})$ for $t \in (t_{i-1}, t_{i})$ is a single essential loop with slope $(2p_i, q_i)$. Let $(2p_g, q_g)$ be the slope of $K \cap (T \times \{1\})$. At each critical point $\{t_i\}$, the level torus is tangent to the core of a single M\"obius band. Thus, $N$ can be partitioned into $g$ concentric \textit{torus}$\times I$ regions, where a region $T_i \subset N$ is a regular neighbourhood of the critical level at $t_i$. Each region thereofre contains a single, once-punctured M\"obius band with boundary slope $(2p_i, q_i)$ on $\partial_{-}T_i$ and $(2p_{i+1}, q_{i+1})$ on $\partial_{+}T_i$.

Each pair of slopes $(2p_{i}, q_{i}), (2p_{i+1}, q_{i+1})$ cobound a single once-punctured M\"obius band in the region $T_i$, so $(2p_{i}, q_{i}), (2p_{i+1}, q_{i+1})$ have intersection number $\pm 2$. Therefore, in the M\"obius band tree, the sequence of slopes at the boundaries of the regions corresponds to a path from $(2p_1, q_1) = s_1$ to $(2p_{g}, q_{g}) = t_1$. Such a path is minimal, since any backtracking along side branches involves at least one pair of regions $T_{i}, T_{i+1}$ where $(2p_{i}, q_{i}) = (2p_{i+2}, q_{i+2})$, which have compressing discs running between all M\"obius bands centremost to $\partial_{+}T_i = \partial_{-}T_{i+1}$. Since minimal paths in a tree are unique, the sequence of slopes of intersection curves is unique.

\begin{lem}\label{lem:MBtorusxI}
If $N_0$ is a \textit{torus}$\times I$, then a pair of slopes $(2p, q), (2p^{\prime}, q^{\prime})$, on $\partial_{+}N, \partial_{-}N$ respectively,  where $|pq^{\prime}-p^{\prime}q|=1$, bound a unique, embedded, \gi{}, once-punctured M\"obius band, up to isotopy.
\end{lem}

\begin{proof}
Suppose $K_1, K_2 \subset N_0$ are embedded, bounded, \gi{}, non-orientable surfaces with boundary slopes $(2p, q), (2p^{\prime}, q^{\prime})$ on $\partial_{-}N_0$, $\partial_{+}N_0$, respectively, such that $|pq^{\prime}-p^{\prime}q|=1$. Isotope the surfaces near $\partial N_0$ so that $\partial K_1 \cap \partial K_2 = \emptyset$. Since both surfaces are \gi{}, any contractible loops of intersection in $K_1 \cap K_2$ bound trivial discs on both surfaces, hence can be removed by trivial surgery.

Given $K_1, K_2$ each boundary compress to a vertical annulus over $(2p, q)$ via a single non-orientable boundary compression, they both have non-orientable genus one. Since the surfaces correspond to the same $\mathbb{Z}_2$-homology class, each complement $K_i \setminus K_j$, for $i\not=j$, is orientable. Therefore, the intersection $K_1 \cap K_2$ includes a non-contractible loop $\sigma$ that forms the core of the M\"obius band in $K_1$ such that $K_1 \setminus \sigma$ is orientable. By geometric incompressibility, the loop $\sigma$ is likewise non-contractible on $K_2$. Since $\sigma$ is not parallel to either component of $\partial K_1$ and $\partial K_1 \cap \partial K_2 = \emptyset$, nor is the loop $\sigma$ boundary parallel on $K_2$. However, $K_2$ has genus one, hence the only disjoint parallel non-contractible loops are parallel into the boundary. Therefore, $\sigma$ is also a non-boundary parallel, non-contractible loop on $K_2$, which forms the core of the M\"obius band in $K_2$ such that $K_2 \setminus \sigma$ is orientable.

Let $A \subset N_0$ be an embedded vertical annulus over the unique slope that has intersection $\pm1$ with both $(2p, q), (2p^{\prime}, q^{\prime})$. Therefore, $A$ can be isotoped to intersect each loop in $\{\partial_{+}K_i, \partial_{-}K_i\}$ in a single point. The intersections $K_i \cap A$ consist of a collection of loops and arcs. By geometric incompressibility, every loop of intersection is trivial and can be removed by trivial surgery. By construction, each surface $K_i$ has a single arc $\lambda_i \subset (K_i \cap A)$ such that $\partial \lambda_i$ has one endpoint on each of $\partial_{-}K_i, \partial_{+}K_i$. The intersection of the two arcs $\lambda_1 \cap \lambda_2$ contains the single point $\sigma \cap A$.

Any other arc in $(K_i \cap A) \setminus \lambda_i$ has endpoints on precisely one boundary component. Without loss of generality, suppose $\alpha_1 \subset (K_1 \cap A) \setminus \lambda_1$ is an edgemost arc with endpoints on $\partial_{+}K_1$, hence cobounds an embedded bigon $b_1 \subset A$ with $\partial_{+}N_0$. Since $\alpha_1$ is disjoint from $\lambda_1$, hence $\sigma$, the bigon $b_1$ represents a trivial boundary compression. Working from edgemost arcs inwards, all arcs in $(K_1 \cap A) \setminus \lambda_1$ with endpoints on $\partial_{+}N_0$ can can be removed by trivial surgery. Similarly, arcs in $(K_1 \cap A) \setminus \lambda_1$ with endpoints on $\partial_{-}N_0$ can be removed, as can arcs in $(K_2 \cap A) \setminus \lambda_2$ with both endpoints at either boundary component.

The remaining intersection $A \cap (K_1 \cup K_2)$ consists entirely of $\lambda_1, \lambda_2$. Without loss of generality, consider a point $x_1 \subset (\lambda_1 \cap \lambda_2)$ that is edgemost with respect to $\partial_{+}A$. This point lies in a loop $\sigma_1 \subset (K_1 \cap K_2)$. If $\sigma_1$ is contractible in either surface, it is likewise contractible in the other and can be removed by geometric incompressibility. Suppose $\sigma_1$ is non-contractible on both $K_i$. Since $K_1, K_2$ have non-orientable genus one and are \gi{}, there is a unique, non-boundary-parallel, non-contractible loop in each surface, up to isotopy. As any loop isotopic to $\sigma$ crosses $\sigma$, no such loop occurs in $K_1 \cap K_2$. Therefore, $\sigma_1$ is boundary parallel in both surfaces.

Since $\sigma_1$ is boundary parallel, take annuli $A_i \subset K_i$ cobounded by $\sigma_1, \partial_{+}A$ such that $A_1 \cap A_2 = \sigma_1$. The union $A_1 \cup A_2$ is boundary parallel, hence bounds a solid torus $N_T$ with the region of $\partial_{+}N_0$ that runs between the parallel boundaries $\partial_{+}K_i$. If either $K_i \cap N_T \not= \emptyset$, the subsurface $K_i \cap N_T$ has boundary only on $A_j$, $i\not=j$. The intersection $K_i \cap A_j$ consists of a collection of loops, any of which that are contractible on either surface can be removed by geometric incompressibility. There are an even number of non-contractible loops in $K_i \cap A_j$, which are parallel on $A_j$ and boundary parallel, hence mutually parallel, on $K_i$. An adjacent pair $\sigma_2, \sigma_3 \subset (K_i \cap A_j)$, where the intervening region $\bar{A}_j \subset A_j$ is disjoint from $K_i$, cobounds a boundary parallel annulus in $K_i$. Surger $K_i$ along the annulus $\bar{A}_j$ to obtain an isotopic surface, less the intersections $\sigma_2, \sigma_3$ with $A_j$. Iterate this process to remove all intersections in $K_i \cap A_j$, $i\not=j$. Hence $K_i \cap N_T = A_i$ for $i=1, 2$.

Surger $K_1$ along the annulus $A_2$ to yield a surface, isotopic in $N_0$, less the intersection $\sigma_1$ with $K_2$. The effect on $A$ is to isotope $\lambda_1$ across $\lambda_2$ near $\partial_{+}A$ to remove the point $x_1$. Iterate this process, working from points in $\lambda_1 \cap \lambda_2$ edgemost to $\partial_{+}A$, inwards towards $\sigma$, to remove all boundary parallel intersections in $K_1 \cap K_2$ that are parallel to $\partial_{+}K_i$. Similarly, all loops in $K_1 \cap K_2$ that are parallel to the opposite boundary component $\partial_{-}K_i$ can be removed. Therefore, $K_1 \cap K_2$ is the single non-contractible loop $\sigma$.

Consider the subdivision of $A$ by the intersection $A \cap \{K_i\} = \{\lambda_i\}$. Two components are three-sided discs $d_1, d_2$, where $\partial d_1$ has one arc on each of $K_1, K_2, \partial_{+}N_0$ and  $\partial d_2$ similarly has one arc on each of $K_1, K_2, \partial_{-}N_0$, such that $\partial d_1 \cap \partial_{+}N_0, \partial d_2 \cap \partial_{-}N_0$ are arcs traversing the region between the parallel boundary curves $\partial_{\pm}K_i$. The discs intersect precisely at $\sigma$, at which they are on locally opposite sides of each $K_i$. The remaining region is a six-sided disc, with a single edge on each of $\partial_{+}N_0, \partial_{-}N_0$ and two disjoint edges on each of $\lambda_1, \lambda_2$.

Isotope the surfaces $K_1, K_2$ in a neighbourhood of the boundary so that $\partial K_1, \partial K_2$ coincide at both boundary components. The effect on $d_1, d_2$ is to shrink the arcs on $\partial_{+}N_0, \partial_{-}N_0$, respectively, to points. The resulting discs therefore have two boundary arcs, one on each of $K_1, K_2$.  

In a closed regular neighbourhood $N(\sigma)$ of $\sigma$, the restrictions $K_i \cap N(\sigma)$ are single M\"obius bands with disjoint boundary, hence are isotopic by Rubinstein~\cite{rubin78}. Isotope the surfaces together in $N(\sigma)$ to yield once-punctured annuli $\bar{K}_i = \overline{K_i \setminus N(\sigma)}$ in the complement, which meet precisely in their boundary slopes.

The union $S = \bar{K}_1 \cup \bar{K}_2$ is a closed, embedded, orientable surface of genus two, where the intersection with each of $\partial_{+}K_2, \sigma, \partial_{-}K_2$ is a non-contractible loop on $S$. Since any incompressible surface in $N_0$ is parallel to a boundary torus, the genus two surface $S$ can be compressed to yield a torus $\bar{S}$, which intersects $\partial N(\sigma)$ and one of $\partial_{+}K_2, \partial_{-}K_2$ in a non-contractible loop. Since no such surface is boundary parallel in $N_0 \setminus N(\sigma)$, the torus $\bar{S}$ is compressible, after which it bounds a $3$-cell, by irreducibility. Therefore, $S$ is fully compressible.

If $\bar{d}_i = \overline{d_i \setminus N(\sigma)}$ for $i=1,2$, then each of $\bar{d}_i$ is an embedded disc with boundary consisting of one arc on each of $\bar{K}_1, \bar{K}_2$, such that $\partial \bar{d}_i$ intersects $\partial N(\alpha)$ and one of $\partial_{\pm}K_2$ precisely once. By construction, such a loop is non-contractible on $S$, so each disc represents a compression of $S$. Since the original discs $d_1, d_2$ are on locally opposite sides of $K_i$, for $i=1,2$, the discs $\bar{d}_1, \bar{d}_2$ are on opposite sides of $S$. Hence, $\bar{d}_1, \bar{d}_2$ are not equivalent and represent a complete set of compressions for $S$.

Homotope the surface $\bar{K}_1$ across the discs $\bar{d}_1, \bar{d}_2$. Since this homotopy effectively fully compresses the surface $S$, further isotopy can ensure that $\bar{K}_1$ fully coincides with $\bar{K}_2$ in $N_0 \setminus N(\sigma)$. Since $\bar{K}_1$ is homotopic to $\bar{K}_2$ in $N_0 \setminus N(\sigma)$, which is Haken, $\bar{K}_1$ is therefore isotopic to $\bar{K}_2$ in the same manifold, by a result of Waldhausen~\cite{wald_suff_large}. Combined with the fact that the restrictions are isotopic in the neighbourhood $N(\sigma)$, the original M\"obius bands $K_1$ and $K_2$ are isotopic in $N_0$. Hence a \gi{}, non-orientable surface in a \textit{torus}$\times I$ is uniquely determined by a pair of boundary slopes with intersection number $\pm2$.\end{proof}

Having thus proved Lemma~\ref{lem:MBtorusxI}, each adjacent pair of edges in the M\"obius band tree uniquely determines a punctured M\"obius band, up to isotopy. Since the minimal sequence of boundary slopes in the M\"obius band tree connecting any pair of slopes is unique, any pair of inner and outer boundary slopes on $\partial_{-}N, \partial_{+}N$ describes a unique geometrically incompressible one-sided surface in $N$, up to isotopy.
\end{proof}
\vspace{0.1\baselineskip}

If one boundary component of the \textit{torus}$\times I$ is filled, this Proposition provides an alternative proof to the solid torus case by Rubinstein~\cite{rubin78}.
\vspace{\baselineskip}

Whilst the M\"obius band tree can be applied equally to study the behaviour of surfaces in a solid torus and those in a \textit{torus}$\times I$, there is an important distinction: a \gi{} non-orientable surface in a \textit{torus}$\times I$ can be compressed toward either of the inner or outer boundary components. Choosing either boundary component as fixed, the M\"obius band tree can be used to track the changing slopes at the opposite boundary. Although the tree structure is independent from choice of co-ordinates, in order that the comments on genus apply, it is necessary to choose co-ordinates such that the fixed boundary has slope $(0,1)$.
\vspace{\baselineskip}

Remark that the uniqueness of minimal paths in the M\"obius band tree does not imply that boundary compressions themselves are unique: compressing different, non-parallel M\"obius bands along inequivalent bigons gives rise to the same boundary slope.

Consider the example of the $(10, 3)$ surface in a solid torus $M_T$. Describe a given M\"obius band by the integer pair $(a, b)$: let $a$ be the intersection number of the parallel boundary arcs of the band on $\partial M_T$ with a $(0,1)$ curve, that intersects their interior only; and, let $b$ similarly be the intersection number with the curve $(1,0)$. In the example, the $(10, 3)$ surface has two $(2, 0)$ M\"obius bands and one $(6, 2)$ band. Compressing the nested $(6, 2)$ M\"obius band results in a $(4, 1)$ boundary curve, as does compressing the un-nested $(2, 0)$ band. Refer to Figures~\ref{fig:(4,1)} and~\ref{fig:(4,1)_1}.

\begin{figure}[h]
   \centering
   \includegraphics[width=2.5in]{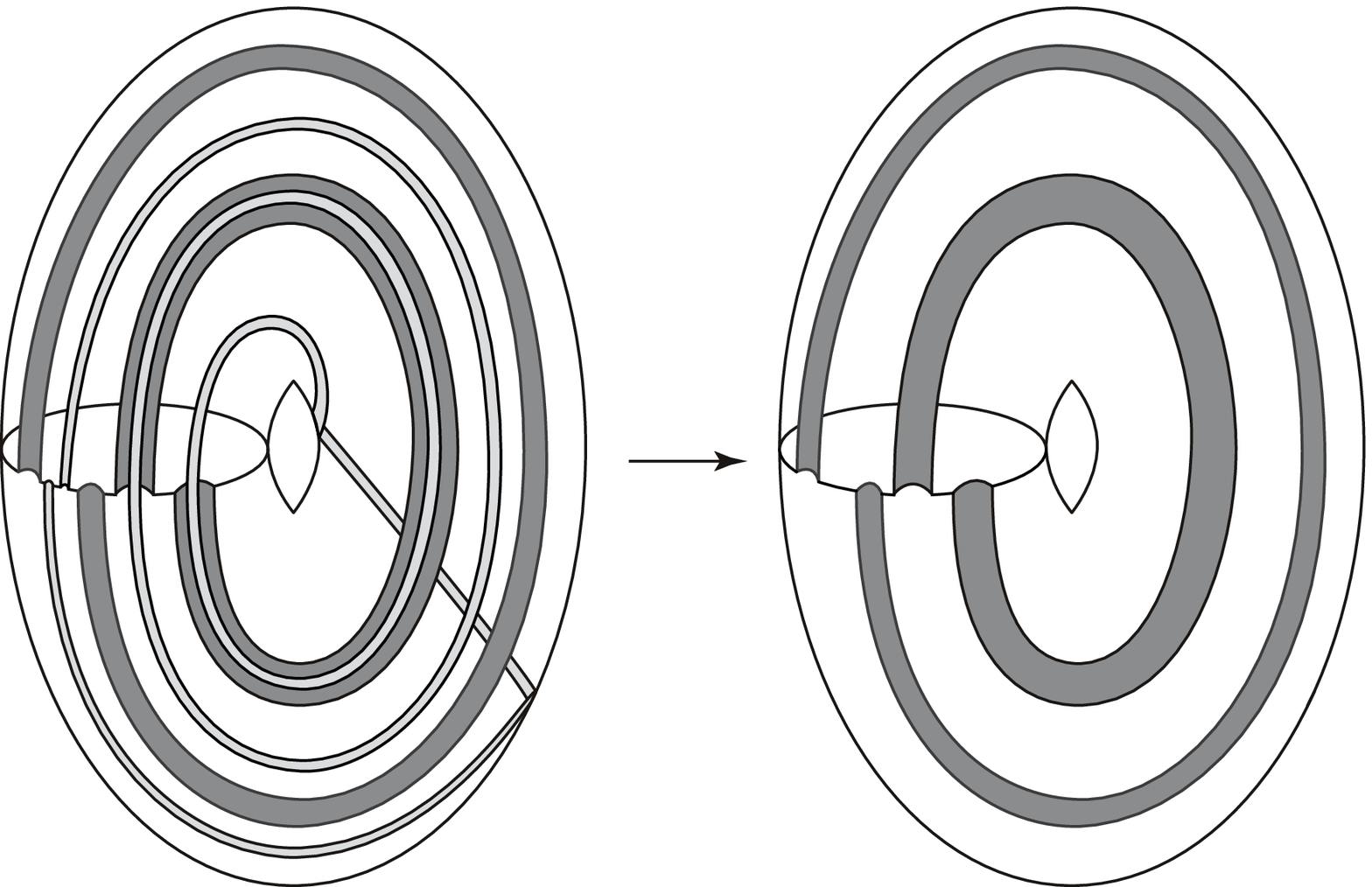}
   \put(-110, 10){\small compress}
   \caption{Boundary compressing the $(6, 2)$ M\"obius band of the $(10, 3)$ surface to get the $(4, 1)$ curve.}
   \label{fig:(4,1)}
   \vspace{\baselineskip}
   \includegraphics[width=5in]{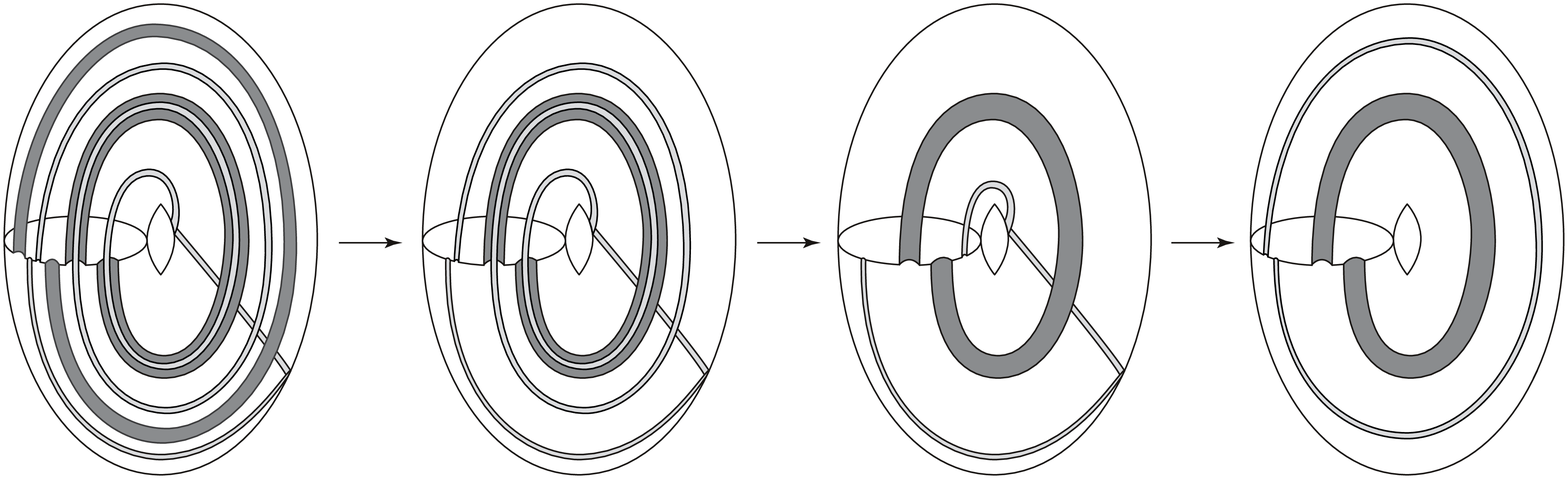}
   \put(-296, 4){\small compress}
   \put(-190, 4){\small slide}
   \put(-94, 4){\small slide}
   \caption{Boundary compressing the $(2, 0)$ M\"obius band of the $(10, 3)$ surface to get the $(4, 1)$ curve.}
   \label{fig:(4,1)_1}
\end{figure}

\subsection{Adding M\"obius bands to bounded surfaces in link spaces}

Having determined the tree structure that relates the change in boundary compressible M\"obius bands in a solid torus and \textit{torus}$\times I$, consider the implications for adding boundary compressible M\"obius bands to surfaces in link spaces:

\begin{lem}\label{lem:boundary_curves}
Given a bounded, \gi{}, boundary incompressible surface $K_0$ in a link space $M$, there is at most one \gi{}, boundary compressible surface $K$, with a given boundary slope $\sigma_k$ on $T_k$, that can be obtained by attaching M\"obius bands to $K_0$.
\end{lem}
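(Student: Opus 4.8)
The plan is to localise the attached M\"obius bands into a boundary collar of $T_k$ and then invoke Proposition~\ref{prop:uniquecollar}. Let $\sigma_0 = \partial K_0 \cap T_k$ denote the boundary slope of the given boundary incompressible surface $K_0$ on $T_k$, and fix a boundary collar $N_k \cong T_k \times I$ with $\partial_{+}N_k = T_k$ and $\partial_{-}N_k$ lying on the core of $M$. The goal is to show that any \gi{}, boundary compressible $K$ obtained from $K_0$ by attaching M\"obius bands, having boundary slope $\sigma_k$ on $T_k$, can be isotoped so that $K$ agrees with $K_0$ on the core and $K \cap N_k$ is a \gi{} one-sided surface in the \textit{torus}$\times I$ region $N_k$, with inner slope $\sigma_0$ and outer slope $\sigma_k$. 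Once this is established, Proposition~\ref{prop:uniquecollar} determines $K \cap N_k$ up to isotopy from the pair $(\sigma_0, \sigma_k)$ alone, and since $K_0$ is fixed in the core, $K$ itself is determined up to isotopy, giving the claimed uniqueness.

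First I would record the combinatorial content. Since $K$ is boundary compressible and reduces to the boundary incompressible $K_0$ by repeated M\"obius band compressions at $T_k$, the successive boundary slopes trace a path in the M\"obius band tree $\Gamma$ from $\sigma_k$ to $\sigma_0$. Geometric incompressibility forces this path to be non-backtracking: a reversal would, exactly as in the proof of Proposition~\ref{prop:uniquecollar}, produce a pair of adjacent concentric regions with repeated boundary slope, and hence a compressing disc running between the intervening M\"obius bands, contradicting that $K$ is \gi{}. Because minimal paths in a tree are unique, the entire sequence of intermediate slopes is forced by the endpoints $\sigma_0$ and $\sigma_k$.

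Next I would promote this to a geometric statement. Working outward from $T_k$, each M\"obius band compression is supported in a concentric \textit{torus}$\times I$ region, and Lemma~\ref{lem:MBtorusxI} shows that each such region contains a unique once-punctured M\"obius band realising its pair of adjacent boundary slopes. Assembling these bands along the forced path, and using that any inessential loops of intersection are removable by trivial surgery by geometric incompressibility, shows that the nested collection of bands can be isotoped entirely into $N_k$ and that $K \cap N_k$ is \gi{} with the stated boundary slopes. Proposition~\ref{prop:uniquecollar} then yields uniqueness of $K \cap N_k$, and with $K_0$ fixed on the core this completes the argument.

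The main obstacle is the localisation step: confirming that the attached M\"obius bands can genuinely be confined to a collar of $T_k$ and do not run through the core to interact with other boundary components. This is precisely the interaction phenomenon flagged in the introduction, so the delicate point is to verify, using the boundary incompressibility of $K_0$, that once all bands associated with $T_k$ are compressed the surface recovered in the core is precisely $K_0$, with no residual intersection pattern left in the interior. I expect this to be where a hypothesis restricting interior interaction between boundary components is required, and where the bulk of the care in the proof will lie.
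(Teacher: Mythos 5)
Your proposal is correct and follows essentially the same route as the paper: isotope the candidate surfaces to coincide with $K_0$ in the core, so that all attached M\"obius bands lie in the boundary collars, then apply Proposition~\ref{prop:uniquecollar} to the collar restrictions, whose inner and outer slopes are forced to be $\sigma_0$ and $\sigma_k$. The localisation step you flag as the main obstacle is dispatched in the paper simply by reading ``obtained by attaching M\"obius bands to $K_0$'' as meaning the surfaces agree with $K_0$ in the core after isotopy, so no extra hypothesis restricting interior interaction is needed here \--- that concern belongs to Proposition~\ref{prop:boundary_incomp}, not this lemma.
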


\begin{proof}
Suppose, by way of contradiction, that there exist two bounded, incompressible surfaces $K_1, K_2 \subset M$, each with boundary slope $\sigma_k$ on boundary torus $T_k$, that are obtained by attaching M\"obius bands to the same boundary incompressible surface $K_0 \subset M$. For each boundary torus $T_k$, isotope the surfaces $K_1, K_2$ in a collar $N_k$ such that their boundary curves are both $s_k \subset T_k$. Further isotope the surfaces in the core $M_0 = M \setminus \{N_k\}$, so that they coincide with $K_0$ in $M_0$.

Consider $K_1, K_2$ in the set of boundary collars $\{N_k\}$. On the inner boundaries $\{\partial_{-}N_k\} = \{T_k\}$, the surfaces have the same boundary curves $\{s_k\}$. Having isotoped the surfaces to coincide in the core, their set of intersections with the outer boundary $\{\partial_{+} N_k\} = \partial M_0$ again coincides in the set of curves $\{t_k\} = \partial K_0 \cap \partial M_0$.

By Proposition~\ref{prop:uniquecollar}, the component-wise matching boundary slopes determine that the restrictions $K_1 \cap N_k$, $K_2 \cap N_k$ are isotopic in each collar, respectively. Since $K_1, K_2$ therefore coincide in both the core and each collar, they are isotopic in $M$.
\end{proof}

\section{Canonical structure for one-sided surfaces in link spaces}

Having determined a means for comparing the arrangement of \gi{} one-sided surfaces in a \textit{torus}$\times I$, it is now possible to progress the discussion to such surfaces in general link spaces. Given a \gi{} one-sided surface in a link space, it is useful to know how the surface can look if all the boundary compressible M\"obius bands are arranged into collars of the boundary. In particular, is there a unique surface away from the boundary, or can the rearrangement of M\"obius bands result in non-isotopic surfaces in the core?

It is determined that under a given restriction, there is a unique boundary incompressible position in the core, which is determined by any complete set of boundary compressing bigons. However, if there is a particular type of embedded disc between boundary components in the manifold, then M\"obius bands can slide between the components, resulting in non-isotopic restrictions in the core.

\begin{defn}\label{defn:polygonal}
A disc $d \subset M$ for a bounded surface $K$ in a link space $M$ is polygonal if $d^{\circ}$ is embedded and $\partial d$ is partitioned into $2n$ arcs, $\lambda_1, \lambda_2, \ldots, \lambda_{2n}$ for $n \geq 2$, where $\lambda_i \cap \partial K = \partial \lambda_i$ for all $i$, and $\lambda_{2k-1} \subset K$ and $\lambda_{2k} \subset \partial M$ for $k=1, \ldots, n$. Each arc $\lambda_{2k-1}$ is essential in $K$ and each arc $\lambda_{2k}$ is embedded, disjoint from all other $\lambda_{2s}$, $s \not= k$ and not homotopic into $\partial K$ along $\partial M$ rel endpoints.
\end{defn}

Note that if $K$ is non-separating, there is the usual potential for isolated singularities on a non-orientable surface, arising from the possibility that $\partial \lambda_{2k-1} \cap \partial \lambda_{2s-1} \not= \emptyset$ for $s \not= k$, where $d$ is on locally opposite sides of $K$. If $\{\lambda_{2k-1}\}$ is a disjoint set, then the polygonal disc $d$ is embedded.

\begin{defn}
A bounded surface in a link space $M$ is said to be `at' a collection of boundary tori $\{T_k\}$ if it intersects each element in the set in at least one arc, or essential loop, and is otherwise disjoint from $\partial M$.
\end{defn}

It is useful to describe both polygonal discs and bounded non-planar surfaces as being {\it at} a particular collection of tori; the former intersecting the tori in a collection of boundary arcs, the latter, a collection of essential loops in the surface boundary.

\begin{defn}
An embedded polygonal disc $d$ for a bounded surface $K$ in a link space $M$ is said to be reducible if there exists an embedded bigon $b \subset M$, with $\partial b = \alpha \cup \beta$ and $\alpha \cap \beta = \partial \alpha = \partial \beta$, where $\alpha = b \cap d$ and $\beta = b \cap K$, such that $\beta \cap d = \partial \beta$ and the endpoints of $\alpha$ lie on distinct boundary arcs $\lambda_{2k-1}, \lambda_{2s-1}$, $k \not= s$. Call $d$ irreducible if it is not reducible.
\end{defn}

Compressing a reducible polygonal disc $d$ along the bigon $b$, which is otherwise disjoint from $d$, results in two embedded polygonal discs of strictly lower complexity. If either such disc is reducible, this process may continue until a set of minimal embedded polygonal discs is obtained, which may include boundary compressing bigons. Indeed, by band-summing sets of embedded bigons together, fully reducible polygonal discs can be produced for any boundary compressible surface. However, reductions of polygonal discs for boundary incompressible surfaces cannot produce non-trivial bigons, hence quadrilaterals are the lowest complexity irreducible polygonal disc possible for such a surface.

Note that the presence of irreducible quadrilateral discs at a single boundary component is typical of a fibred knot complement, however, generically these will have isolated singularities due to the twist from the monodromy.

\begin{prop}\label{prop:boundary_incomp}
Any bounded, \gi{} one-sided surface embedded in a link manifold that is at the collection $\{T_k\} \subseteq \partial M$ has a boundary incompressible restriction outside of a neighbourhood of $\{T_k\}$, which is unique up to isotopy if the manifold contains no non-trivial, embedded, irreducible, polygonal discs for the boundary incompressible surface at $\{T_k\}$.
\end{prop}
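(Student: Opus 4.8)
The plan is to separate the existence of a boundary incompressible restriction from its conditional uniqueness, since the real content lies in the latter. For existence, I would maximally boundary compress $K$. Since a \gi{} one-sided surface admits no non-trivial orientable boundary compression at a single essential boundary loop, the only boundary compressions available are M\"obius band compressions, each of which strictly lowers the non-orientable genus, so the process terminates. By Proposition~\ref{prop:uniquecollar}, the portion of $K$ lying in a collar $N_k$ of each $T_k$ is a nested stack of once-punctured M\"obius bands recorded by a path in the M\"obius band tree; hence every such compression can be realised by an isotopy supported in $N_k$ that absorbs one band into the collar. Once these are exhausted, the restriction $K_0 = K \cap \big(M \setminus \bigcup_k N_k\big)$ meets each $\partial_+ N_k$ in the root slope of its path and admits no further boundary compression, so it is boundary incompressible.

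For uniqueness, suppose $K_0^1, K_0^2$ are two such restrictions, obtained from complete systems of boundary compressing bigons $\mathcal{B}^1, \mathcal{B}^2$ for $K$. First I would compare the boundary data: the slope at which $K_0^i$ meets $\partial_+ N_k$ is governed by how many M\"obius bands are absorbed toward $T_k$, and this can differ between the systems only if some band is absorbed toward $T_k$ in one system and toward a different torus in the other, or is rerouted around the topology of the core. After putting $\mathcal{B}^1 \cup \mathcal{B}^2$ in general position and discarding intersection loops trivial on $K$ by geometric incompressibility, I would band-sum the members that witness such a discrepancy. The resulting disc has boundary alternating between essential arcs on $K_0$ and arcs on $\partial M$ meeting the collection $\{T_k\}$ --- exactly a polygonal disc in the sense of Definition~\ref{defn:polygonal}.

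The hypothesis forbids non-trivial irreducible polygonal discs at $\{T_k\}$, and by the observation preceding the proposition the reductions of such a disc for a boundary incompressible surface cannot produce non-trivial bigons. Hence every comparing disc is reducible and reduces to trivial bigons; reducing along a bigon corresponds to isotoping one compression onto the other rather than to a genuine transfer of a band, so the two systems absorb the same bands into the same collars and the slopes $\{t_k\}$ on $\partial M_0$ agree. With boundary slopes matched, Proposition~\ref{prop:uniquecollar} makes the collar pieces of $K$ agree up to an isotopy supported in $\bigcup_k N_k$, and the two boundary incompressible cores --- sharing boundary, $\mathbb{Z}_2$-homology class, and the M\"obius-band data recovering $K$ --- are then shown isotopic by the argument of Lemma~\ref{lem:MBtorusxI}: remove regular neighbourhoods of the M\"obius band cores to reduce to orientable subsurfaces, verify these are homotopic in the Haken core, and apply Waldhausen~\cite{wald_suff_large}.

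I expect the crux to be the second and third paragraphs: making precise that the combinatorial difference between two complete bigon systems is faithfully recorded by embedded polygonal discs, and that each reduction of such a disc realises an honest isotopy of the compressed surface rather than a rearrangement of bands. The non-separating case adds the complication flagged after Definition~\ref{defn:polygonal}, namely isolated singularities where the band-summed disc lies on locally opposite sides of $K$; these must be controlled so that the discs produced are genuinely embedded before the reducibility dichotomy can be invoked.
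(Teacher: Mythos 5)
Your existence paragraph is fine and matches the paper's (brief) treatment. The gap is in the uniqueness half, specifically in the mechanism by which a slope discrepancy is supposed to contradict the hypothesis. The discs you build by band-summing members of $\mathcal{B}^1 \cup \mathcal{B}^2$ are reducible by construction --- undoing the band-sums is itself a sequence of reductions --- and the paper notes this explicitly: band-summing sets of embedded bigons produces fully reducible polygonal discs for \emph{any} boundary compressible surface. Such discs exist whether or not the two systems disagree and whether or not the manifold satisfies the hypothesis; since the hypothesis excludes only \emph{irreducible} polygonal discs, and moreover only those for the \emph{boundary incompressible} restriction (your band-sums are discs for $K$, whose $K$-arcs run through the collars, not discs for $K_0$ in the core), no contradiction can be extracted from them. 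The ensuing claim that every comparing disc ``reduces to trivial bigons'' and that each reduction ``corresponds to isotoping one compression onto the other\dots so the two systems absorb the same bands into the same collars'' is precisely the statement needing proof: nothing rules out that the fully reduced pieces are non-trivial bigons for the boundary compressible surface $K$, which is exactly what a genuine transfer of a band between boundary components looks like.

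The paper's mechanism is different and is what makes the hypothesis bite. It realises the two compression systems as two different expansions $\{N_k^1\}$, $\{N_k^n\}$ of the boundary collars, in whose complements $K$ restricts to boundary incompressible surfaces $K_1$, $K_n$. If the slopes on $\partial_{+}N_1^1$ and $\partial_{+}N_1^n$ differ, one first shows neither expanded collar can contain the other (containment would exhibit a boundary compressing bigon for $K_1$, contradicting its boundary incompressibility), and then takes a single \emph{innermost} bigon $B$ of the system $\{\mathcal{B}_j^n\}$ meeting $N_1^n \setminus N_1^1$ non-trivially; its restriction $\bar{B} = B \cap \overline{M \setminus \{N_k^1\}}$ to the other core is forced to be an embedded \emph{irreducible} polygonal disc for $K_1$, with boundary arcs alternating between $K_1$ and $\partial_{+}N_1^1$, because $B$ must repeatedly enter and leave $N_1^1$. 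That is the object the hypothesis forbids, so the slopes must agree at every component; Proposition~\ref{prop:uniquecollar} then matches the collar pieces, and the core restrictions are isotopic simply because both unions reassemble to $K$ --- no Waldhausen-type homotopy-implies-isotopy step in the core is needed (nor available in your outline, since homotopy of the two restrictions is never established there).
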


\begin{proof}
Take $K$, a bounded \gi{} surface embedded in a manifold $M$ such that $K$ has essential intersection with the collection $\{T_{k}\} \subseteq \partial M$. Let $\{N_k\} \subset M$ be a set of closed regular neighbourhoods of these boundary tori, with $N_k$ a neighbourhood of $T_{k}$ such that $N_k \cap K$ is a regular neighbourhood of $T_k \cap K$ for all $k$. Let $M_0 = \overline{M \setminus \{N_k\}}$ be the core of $M$ and call $M_{k} = \overline{M \setminus N_k}$ the {\it associated core} with respect to a single boundary collar $N_k$. Let $\partial_{-}N_k = T_k$, such that $\{\partial_{+}N_k\} = \partial M_0$. Fix the surface $K$ at the link space boundary $\{T_k\}$ throughout. 

Consider boundary compressions of $K$ in $M_0$ toward $\{\partial_{+}N_k\}$. By geometric incompressibility, all such compressions are non-orientable, hence describe isotopies of $K$ that move M\"obius bands into the boundary collars.

Suppose $K$ can be boundary compressed across $\partial M_0$ in distinct ways, such that non-isotopic boundary incompressible restrictions can be obtained in the core. Rather than moving the surface $K$ to achieve either set of boundary compressions, capture this behaviour by different sets of boundary collars:

Let $\{N_k^1\}$ be a disjoint set of boundary collars for $\partial M$ with $\{N_k\} \subset \{N_k^1\}$, where $\{N_k^1\}$ is obtained by expanding $\{N_k\}$ such that the restriction $K_1 = K \cap \overline{M \setminus \{N_k^1\}}$ is boundary incompressible: 

If $\mathcal{B}_1^{1}$ is a maximal disjoint set of embedded boundary compressing bigons cobounded by $K$ and $\{\partial_{+}N_k\}$, expand $N_1$ to include $\mathcal{B}_1^{1}$. If $K$ is boundary compressible in the complement of the expanded collars, there is a new maximal disjoint set of embedded bigons $\mathcal{B}_2^{1}$, in which the previous bigons nest; expand the collars again to include this set. Continue this process of expansion along $\{\mathcal{B}_i^{1}\}$, where $1 \leq i \leq l$ denotes level of nesting, to obtain maximally expanded collars $\{N_k^1\}$, in the complement of which $K$ is boundary incompressible.

Suppose there exists an alternative disjoint set of boundary collars $\{N_k^n\}$ with $\{N_k\} \subset \{N_k^n\}$, obtained by expanding $\{N_k\}$ along an alternative nested collection of maximal disjoint sets of boundary compressing bigons $\{\mathcal{B}_j^n\}$, where $1 \leq j \leq m$, such that the restriction $K_n = K \cap \overline{M \setminus \{N_k^n\}}$ is also boundary incompressible. Finally, assume that after identifying the cores $\overline{M \setminus \{N_k^1\}}$ and $\overline{M \setminus \{N_k^n\}}$ by a homeomorphism $\gamma \co \overline{M \setminus \{N_k^1\}} \rightarrow \overline{M \setminus \{N_k^n\}}$, that $\gamma(K_1)$ is not isotopic to $K_n$ in $\overline{M \setminus \{N_k^n\}}$.

Since each surface is boundary incompressible in the core, the boundary compressible M\"obius bands of $K$ are variously arranged in the expanded sets of boundary collars $\{N_k^1\}, \{N_k^n\}$. Each collar is a \textit{torus}$\times I$ and $K$ is fixed at $\{T_k\}$ throughout, therefore, examine such subsurfaces via the slopes at $\{\partial_{+}N_k^1\}, \{\partial_{+}N_k^n\}$ using the M\"obius band tree:

Let $\bar{K}^1 = K \cap \{N_k^1\}$ and $\bar{K}^n = K \cap \{N_k^n\}$. If the boundary slopes $K \cap \partial_{+}N_k^1$ and $K \cap \partial_{+}N_k^n$ coincide at every boundary component, then the restrictions $\bar{K}^1, \bar{K}^n$ are component-wise isotopic in the respective collars. Since the surfaces $K_1 \cup \bar{K}^1$ and $K_n \cup \bar{K}^n$ are both isotopic to $K$ in the link space $M$, after the above homeomorphism, the remaining restrictions $\gamma(K_1), K_n$ are isotopic in the remaining piece: $\overline{M \setminus \{N_k^n\}}$. Therefore, if any two maximal sets of boundary compressions result in surfaces with the same sets of boundary curves at all boundary components, then the boundary incompressible surfaces are isotopic in the core.

Suppose one pair of boundary slopes for $K \cap \partial_{+}N_1^1, K \cap \partial_{+}N_1^n$, differs. Choose co-ordinates for $T_1$ to apply to both collars, such that $(0,1) = K \cap T_1$, hence $(2p_1, q_1) = K_1 \cap \partial_{+}N_1^n$ and $(2p_n, q_n) = K_n \cap \partial_{+}N_1^n$. The slopes correspond to distinct vertices $(p_1, q_1)$ and $(p_n, q_n)$ in the M\"obius band tree $\Gamma$, which are joined by a shortest path $\lambda \subset \Gamma$. Enumerate vertices in $\lambda$, starting at $(p_1, q_1)$ and ending at $(p_n, q_n)$. Whilst each of $K_1, K_n$ are obtained by boundary compressing $K$ out of the core, the vertex $(0,1)$ corresponding to $K$ may not be included in $\lambda$ if there are a number of common boundary compressions.

\begin{claim}
The slopes of the boundary curves $K \cap \partial_{+}N_1^1$ and $K \cap \partial_{+}N_1^n$ differ only if there exist embedded, irreducible polygonal discs in the cores $(M \setminus \{N_k^1\}) \cup \partial_{+}N_1^1, (M \setminus \{N_k^n\}) \cup \partial_{+}N_1^n$ for $K_1, K_n$, respectively.
\end{claim}

Consider $N_1^1$ with reference to $N_1^n$. Since the slopes $K \cap \partial_{+}N_1^1$ and $K \cap \partial_{+}N_1^n$ differ, each collar contains a topologically different subsurface of $K$: after identifying the cores via the homeomorphism $\gamma$ described above, the restrictions $\gamma(K_1^{\prime}), K_n^{\prime}$ are not isotopic in $N_1^n$. Note that either or both of the genus and nesting may differ between the two surfaces.

If one collar is a proper subset of the other, say $N_1^1 \subset N_1^n$, then the restriction $\bar{K}_1^1 = K \cap N_1^1$ is a subsurface of $\bar{K}_1^n = K \cap N_1^n$. Since the boundary slopes on $N_1^1, N_1^n$ differ, the complement $\bar{K}_1^n \setminus \bar{K}_1^1$ is non-trivial in the subcollar $N_1^n \setminus N_1^1$. Therefore, there exists a boundary compressing bigon cobounded by $\bar{K}_1^n \setminus \bar{K}_1^1$ and $\partial_{+}N_1^1$ embedded in $N_1^n \setminus N_1^1$, contradicting the boundary incompressibility of $K_1$ in $\overline{M \setminus \{N_k^1\}}$. Hence, neither collar is a proper subset of the other.

Since $N_1^n \setminus N_1^1 \not=\emptyset$, there exists an element of $\{\mathcal{B}_j^n\}$ that intersects $N_1^n \setminus N_1^1$ non-trivially. Let $B \in \{\mathcal{B}_j^n\}$ be an innermost such bigon. If $B \setminus N_1^1$ is a bigon in $\overline{M \setminus \{N_k^1\}}$, then this contradicts the geometric incompressibility of $K_1$ in the core. Therefore, $B$ is a bigon that non-trivially intersects $\{N_k^1\}$.

As the boundary slopes at all other collars agree, $N_k^1 = N_k^n$ for $k \not=1$, therefore since each of $\{N_k^1\}, \{N_k^n\}$ is a disjoint set, the fact $B \subset N_1^n$ determines that  $B \cap N_k^1 = \emptyset$ for $k \not=1$. Therefore, $B$ non-trivially intersects $N_1^1$. Hence, the restriction $\bar{B} = B \cap \overline{M \setminus \{N_k^1\}}$ is an embedded irreducible polygonal disc with boundary arcs alternating between $K_1$ and $\partial_{+}N_1^1$.

Therefore, under the restriction that no embedded, irreducible polygonal discs exist for boundary incompressible surfaces in the core, there are no boundary incompressible restrictions with different boundary slopes at a single boundary component.

If the boundary slopes at the expanded collars differ at boundary components $1, 2, \ldots, s$, then a similar argument can be applied. However, the non-empty intersection of elements in $\{N_k^1\}, \{N_k^n\}$ may occur between any $N_k^1, N_l^n$, with $k, l \in 1, 2, \ldots, s$. If $N_1^1, N_2^n$ are such an intersecting pair, an innermost embedded bigon $B^{\prime} \subset N_2^n$ with $B^{\prime} \cap (N_2^n \setminus N_1^1)$ non-trivial can be chosen as above. However, in the final step, the restriction $\bar{B}^{\prime} \subset \overline{M \setminus \{N_k^1\}}$ intersects $N_1^1, N_2^n$ as above, however may also intersect any other collar $N_k^1$ for $k \in 1, 2, \ldots, s$. Hence, $\bar{B}^{\prime}$ is an embedded irreducible polygonal disc with boundary arcs alternating between $K_1$ and any of $\{\partial_{+}N_k^1\}$ for $k \in 1, 2, \ldots, s$.

Therefore, under the restriction that no embedded, irreducible polygonal discs exist for boundary incompressible surfaces in the core, there are no boundary incompressible restrictions with differing boundary slopes any collection of boundary component. Hence, in thus restricted link spaces, any bounded, \gi{} one-sided surface embedded at the collection $\{T_k\} \subseteq \partial M$ has a unique boundary incompressible restriction in the core, which is unique up to isotopy.
\end{proof}

The simplest illustration of the polygonal disc phenomenon arises in the basic case of a \textit{torus}$\times I$. An embedded annulus vertical to the product structure can be recharacterised as an embedded, irreducible quadrilateral disc for any embedded \gi{} non-orientable surface. Such a disc allows M\"obius bands to slide between collars of the inner and outer boundaries. Therefore, there is a choice of assigning the band to one or the other boundary collar, whereby different boundary slopes result in the core. See Figures~\ref{fig:torusxI1}, \ref{fig:torusxI2} and \ref{fig:torusxI3} for a comparison of the $(2,1)$ M\"obius band arranged in a collar of the outer boundary (left column) and a collar of the inner boundary (right column).

\begin{figure}[htbp] 
   \centering
   \includegraphics[width=3in]{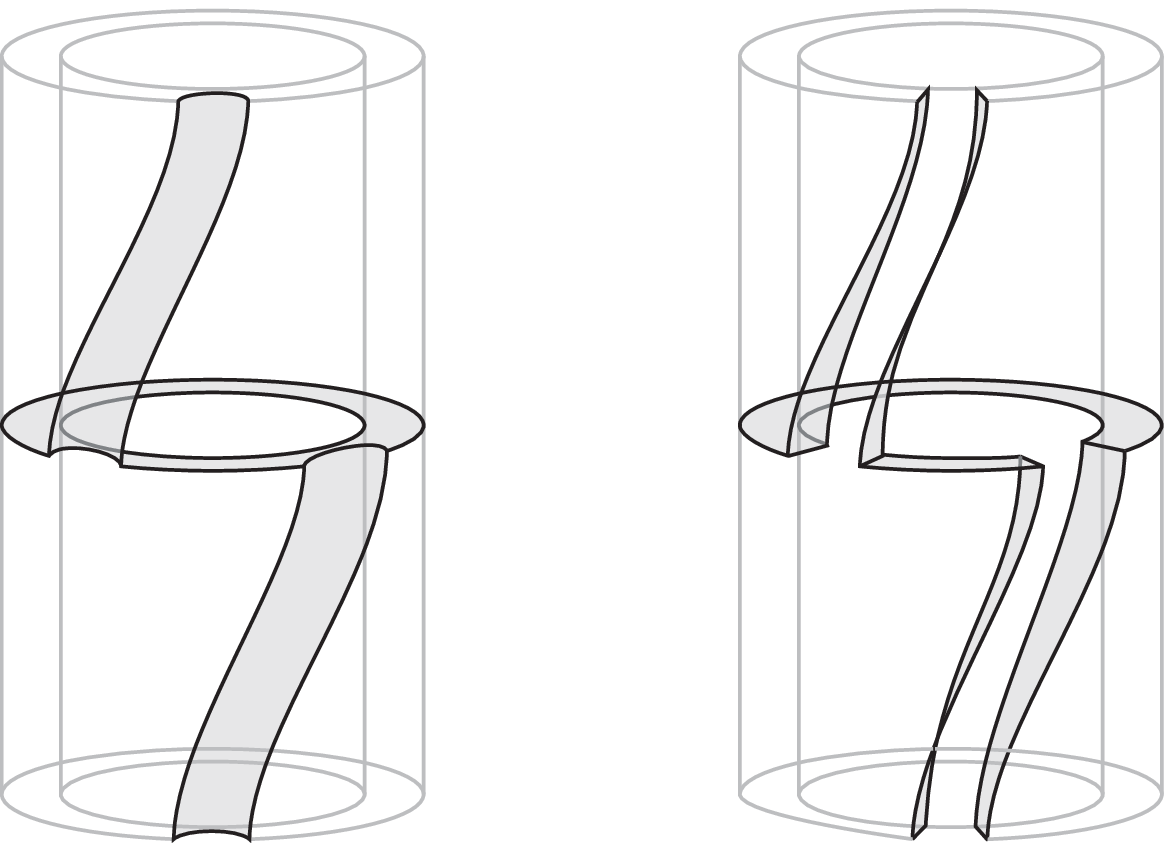} 
   \caption{Collar of $\partial_{+}N$}
   \label{fig:torusxI1}
   \vspace{8mm}
   \includegraphics[width=3in]{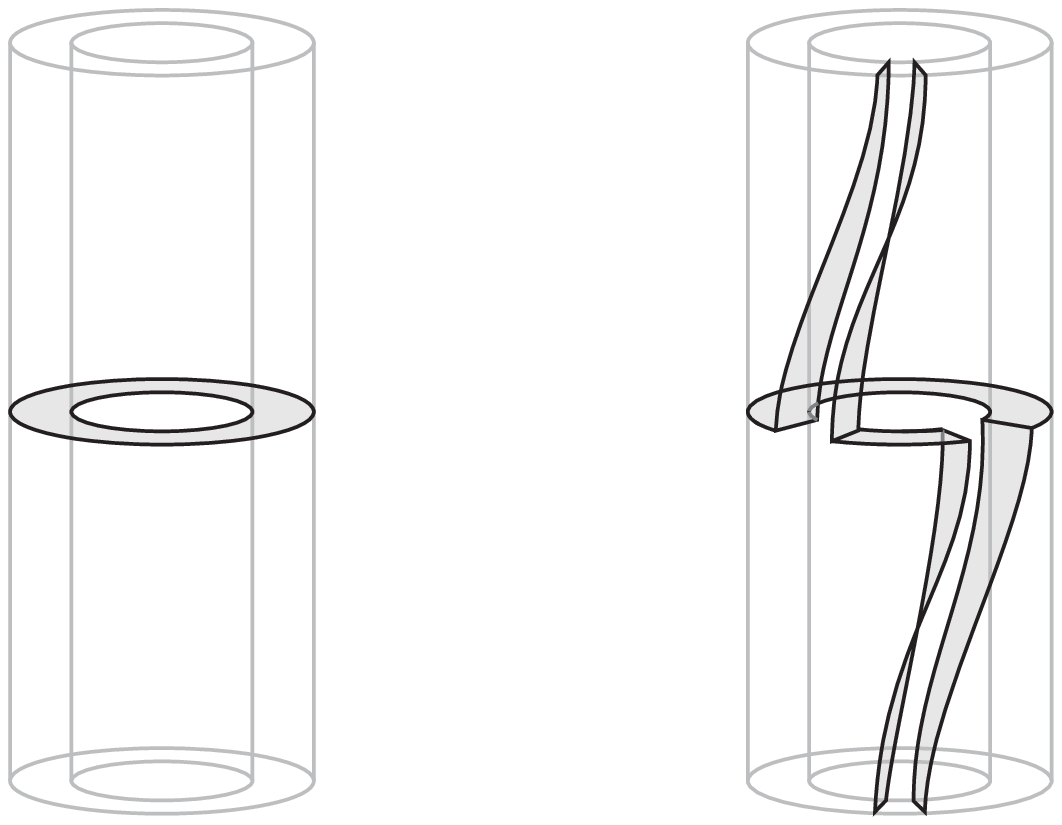} 
   \caption{Non-isotopic boundary incompressible restrictions in the core}
   \label{fig:torusxI2}
\vspace{8mm}
   \centering
   \includegraphics[width=3in]{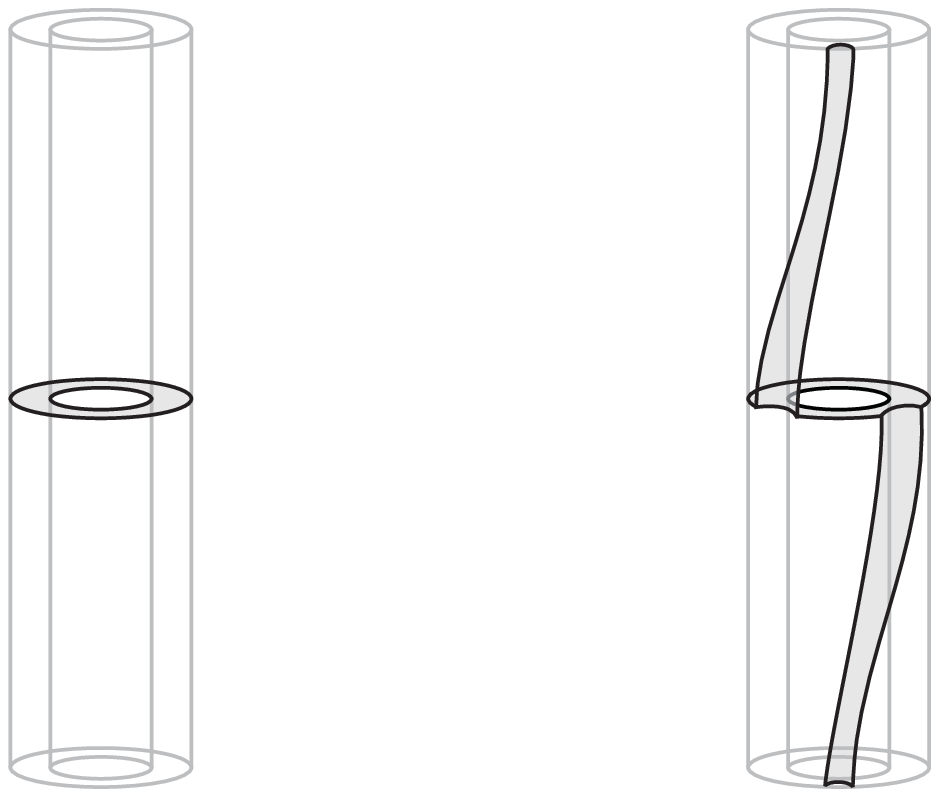} 
   \caption{Collar of $\partial_{-}N$}
   \label{fig:torusxI3}
\end{figure}

As per Proposition~\ref{prop:boundary_incomp}, this non-uniqueness of boundary incompressible restrictions is precluded by the absence of embedded, irreducible polygonal discs. For the exceptional cases where such discs are present, it is expected that there are combinatorial conditions on a polygonal disc in order for it to produce an isotopy that gives different boundary incompressible restrictions; it does not seem likely that such discs will produce different restrictions generically. This is the basis of ongoing work.
\vspace{\baselineskip}

Given the detection of embedded polygonal discs for any given link space is a non-trivial problem, consider a simple class of examples with no such discs:
\vspace{1.25\baselineskip}

\textbf{Examples of knot spaces with no irreducible embedded polygonal discs}

Consider a once punctured torus bundle over the circle $M$ with monodromy $\varphi$. We can classify
$\varphi$ up to conjugacy by its action on the first homology of the fibre, hence a matrix $A$ in $SL(2,\mathbb{Z})$. Denote this matrix by

$$A=\left(\begin{array}{cc}a & b \\c & d\end{array}\right)$$

The aim is to show that any embedded irreducible polygonal disc $d$ in $M$ for the fibre $K$ must be quadrilateral, and to relate existence of such a disc to properties of the matrix $A$. In particular, a large class of examples will be constructed where there are no such discs. 

Suppose first that such a disc $d$ is not quadrilateral. Choose any arc $\lambda$ of $d \cap K$ and form a new quadrilateral disc of the form $d^{\prime}=\lambda \times [0,1]$ using the bundle structure of $M$ as ${(K \times [0,1])} /\varphi$. In general, this disc will have isolated intersections since the embeddings $\lambda = \lambda \times \{0\}$ and $\gamma = \lambda \times \{1\}$ in $K$ will have isolated intersections. Note that $\gamma = \varphi(\lambda)$.

We can arrange that $d \cap d^{\prime}$ consists of arcs with endpoints on $K$, by removing any loops of intersection and choosing the boundary arcs of the discs on $\partial M$ to be disjoint. We also can suppose that the boundary arc $\lambda$ of $d^{\prime}$ is pushed off $d$ so that all the arcs of $d \cap d^{\prime}$ have ends on $\gamma$. Choose an innermost arc $\mu \in d \cap d^{\prime}$ that bounds a bigon in $d^{\prime}$. Cutting $d$ along this bigon shows that either $d$ is reducible or we can isotope $d^{\prime}$ to remove $\mu$. This leads to a contradiction, unless $d$ was already a quadrilateral disc parallel to $d^{\prime}$.

A properly embedded essential arc $\lambda$ on $K$ gives rise to a quadrilateral disc $d^{\prime}$, as above, which is embedded, exactly when the arcs $\lambda, \varphi(\lambda)=\gamma$ are disjoint or meet in one point. If we consider $\lambda$ as an element $(x,y)$ of $H_1(K)= \mathbb{Z} \times \mathbb{Z}$ by joining its ends along $\partial K$, then $\gamma$ becomes the element $(ax+by,cx+dy)$ in $H_1(K)$. Computing intersection number of these classes, we see that the condition required for an embedded quadrilateral disc is that $$(ax+by)y-(cx+dy)x =0,1,-1$$

It is now straightforward to find many matrices for which no solution exists for this equation. For example, suppose $a,d$ are odd and $b,c$ are even, ie. $A \equiv I$ mod $2$. Then the left side of the equation is even, so the only possible solution is the right side is $0$. But as $x,y$ and $ax+by, cx+dy$ are relatively prime pairs, the only solution is $cx+dy =ny, ax+by =nx$ for some integer $n$. But the only integer eigenvalues for $A$ occur when  $A$ has trace $\pm 2$. We conclude if trace $A \ne 2$ and $A \equiv I$ mod $2$, then there are no embedded polygonal discs.
\vspace{1.25\baselineskip}

Having thus established the set of link spaces where no embedded, irreducible polygonal discs are present is non-empty, Proposition~\ref{prop:boundary_incomp} determines that in the core of such spaces, the boundary incompressible restriction of \gi{} surfaces is unique.

\bibliographystyle{abbrv}
\bibliography{oshs}

\textit{Department of Mathematics\\
Oklahoma State University\\
Stillwater, Oklahoma 74078\\
United States}

Email: \texttt{bartolini@math.okstate.edu}

\end{document}